\documentclass[12pt]{amsart}

\usepackage{color,graphicx,amssymb,latexsym,amsfonts,txfonts,amsmath,amsthm}
\usepackage{pdfsync}
\usepackage{amsmath,amscd}
\usepackage[all,cmtip]{xy}

\usepackage{hyperref}
\hypersetup{
    colorlinks=true,       
    linkcolor=blue,          
    citecolor=blue,        
    filecolor=blue,      
    urlcolor=blue           
}

\oddsidemargin0.1in
\evensidemargin0.1in
\textwidth6.5in
\textheight8.9in

\input epsf

\vfuzz2pt
\hfuzz2pt

\newtheorem{theo}{Theorem} 
\newtheorem*{conj}{Classical Schottky uniformization conjecture}

\newtheorem{lemm}{Lemma}

\theoremstyle{remark}
\newtheorem{rema}{\bf Remark}

\begin{document}

\title{Towards a proof of the Classical Schottky Uniformization Conjecture} 
\date{\today}

\author{Rub\'en A. Hidalgo} 
\thanks{Partially supported by Project Fondecyt 1230001} 
\keywords{Riemann Surfaces,  Schottky Groups} 
\subjclass[2000]{30F10, 30F40}
\address{Departamento de Matem\'atica y Estad\'{\i}stica, Universidad de La Frontera,  Temuco, Chile}
\email{ruben.hidalgo@ufrontera.cl}

\begin{abstract}
By Koebe's retrosection theorem, every closed Riemann surface of genus $g \geq 2$ is uniformized by a Schottky group. Marden observed that there are Schottky groups that are not classical ones, that is, they cannot be defined by a suitable collection of circles. This opened the question of whether every closed Riemann surface can be uniformized by a classical Schottky group. In this paper, we observe that every Belyi curve can be uniformized by a classical Schottky group. Since Belyi curves form a dense locus 
in the moduli space ${\mathcal M}_{g}$ and the locus ${\mathcal M}_{g}^{cs} \subset {\mathcal M}_{g}$ of those Riemann surfaces uniformized by classical Schottky groups is a non-empty open set, this ensures that ${\mathcal M}_{g}^{cs}$ is open and dense in ${\mathcal M}_{g}$.
\end{abstract}

\maketitle

\section{Introduction}
A {\it Schottky group of rank $g \geq 2$} is defined as (equivalent definitions can be found in \cite{Maskit2}) a group $G$ generated by $g$ loxodromic elements $A_{1},\ldots, A_{g}$, where there exists a collection of
$2g$ pairwise disjoint simple loops $C_{1},\dots,C_{g}$, $C'_{1}, \ldots,C'_{g}$ on the the Riemann sphere $\widehat{\mathbb C}$
bounding a common region $\mathcal D$ of connectivity $2g$ so that $A_{j}(C_{j})=C'_{j}$ and $A_{j}({\mathcal D}) \cap {\mathcal D} = \emptyset$, for all $j=1,\ldots,g$.  The domain $\mathcal D$ is called a {\it standard fundamental domain} for $G$, the collection of loops $C_{1},\ldots,C_{g}$, $C'_{1},\ldots,C'_{g}$ a {\it fundamental set of loops} and the M\"obius transformations $A_{1},\ldots,A_{g}$ a {\it Schottky set of generators}.  
It is known that $G$ is a free group of rank $g$ and, in \cite{Chuckrow}, Chuckrow proved that every set of $g$ generators of it is a Schottky set of generators. Its region of discontinuity $\Omega$ is connected and the quotient space $\Omega/G$ is a closed Riemann surface of genus $g$ \cite{Maskit:Kleinian Groups}.

If $S$ is a closed Riemann surface, then 
Koebe's retrosection theorem states that there is a Schottky group $G$, with region of discontinuity $\Omega$, such that $\Omega/G$ is biholomorphic to $S$ (a simple proof of this fact was also given by L. Bers in \cite{Bers} using quasiconformal mappings). In this case, we say that $S$ is uniformized by $G$.

A Schottky group is called {\it classical} if it has a Schottky set of generators with a fundamental set of loops consisting of circles. 
 Marden \cite{marden:schottky} showed that, for every $g \geq 2$, there are non-classical Schottky groups of rank $g$; see also
\cite{J-M-M:schottky}. An explicit family of examples of non-classical Schottky groups of rank two was
constructed by Yamamoto \cite{yamamoto:example} and a theoretical construction of an infinite collection of non-classical Schottky groups is provided in \cite{H-M}. In \cite{Hou1,Hou2} Hou proved that every convex cocompact Kleinian group of Hausdorff dimension less than one is a classical Schottky group.
This opened the question of whether every closed Riemann surface has a classical Schottky uniformization (I think it was Lipman Bers who first asked this question?).

\begin{conj}
Every closed Riemann surface can be uniformized by a classical Schottky group.
\end{conj}

An affirmative answer to the above conjecture was known in either one of the following two situations.
\begin{enumerate}
\item $S$ admits an anticonformal automorphism of order two with fixed points (Koebe).
\item $S$ has $g$ pairwise disjoint homologically independent short loops (McMullen). 
\end{enumerate}

Let ${\mathcal M}_{g}$ be the moduli space of closed Riemann surfaces of genus $g \geq 2$, ${\mathcal M}_{g}^{cs}$ be its locus consisting of classes of Riemann surfaces which can be uniformized by classical Schottky groups.

\begin{theo}\label{conj}
${\mathcal M}_{g}^{cs}$ is an open and dense subset of ${\mathcal M}_{g}$.
\end{theo}

The main idea in proving the above result is the following. Let ${\mathcal M}_{g}^{b}$ be the locus of classes of Belyi curves. It is well known that ${\mathcal M}_{g}^{cs}$  is a non-empty open set and that ${\mathcal M}_{g}^{b}$ is a dense subset (as a consequence of Belyi's theorem \cite{Belyi}). So, to prove Theorem \ref{conj}, it is enough to check that every Belyi curve can be uniformized by a classical Schottky group (Theorem \ref{belyischottky}).

\begin{rema}
It is important to remark that we are dealing with all Belyi curves and not just the regular Belyi curves (also called quasiplatonic curves); as these last ones only provide a finite collection of points in ${\mathcal M}_{g}$.
\end{rema}

Moduli space ${\mathcal M}_{g}$ is non-compact. A compactification, the Deligne-Mumford compactification, is obtained by considering stable Riemann surfaces of genus $g$. Every stable Riemann surface of genus $g \geq 2$ can be uniformized by a noded Schottky group of rank $g$ (a geometrically finite Kleinian group isomorphic to the free group of rank $g$). These noded Schottky groups are geometric limits of Schottky groups. Similarly to Schottky groups, noded Schottky groups can be defined geometrically using a system of $2g$ simple loops, but one now permits tangencies at parabolic fixed points. A neoclassical Schottky group is one for which this system of loops can be chosen to be circles. In \cite{H-M} it was observed that there are noded Riemann surfaces that cannot be uniformized by a neoclassical Schottky group.

\section{Preliminaries}
\subsection{Annuli and their modules}
An {\it annulus} (or ring domain) is a doubly connected domain $A$ in $\widehat{\mathbb C}$. There exists a unique $r>1$ so that $A$ is biholomorphically equivalent to a circular annulus $A_{r}:=\{z \in {\mathbb C}: r^{-1}<|z|<r\}$. The {\it modulus} of $A$ is defined as ${\rm mod}(A)=\frac{1}{\pi} \log r$. Next, we list some known results on the modulus of annuli.

\begin{lemm}[Gr\"otzch inequality]\label{lema1}
If $A$ is an annulus and $B \subset A$ an essential annulus (i.e., the inclusion map induces an injective map on the fundamental group), then ${\rm mod}(B) \leq {\rm mod}(A)$.
\end{lemm}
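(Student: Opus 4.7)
The plan is to prove the inequality via the extremal length characterization of the modulus, which turns the geometric inclusion $B \subset A$ into an inclusion of curve families. For any annulus $C$, let $\Gamma_{C}$ denote the family of locally rectifiable closed curves in $C$ homotopic to the core. I would first establish the identity $\lambda(\Gamma_{C}) = 1/{\rm mod}(C)$, where $\lambda$ is the extremal length. Working in the model $A_{r} = \{r^{-1} < |z| < r\}$ and testing against the density $\rho(z) = 1/|z|$, any essential curve $\gamma$ satisfies
\[
\int_{\gamma} \frac{|dz|}{|z|} \;\geq\; \left|\int_{\gamma} d(\arg z)\right| \;=\; 2\pi,
\]
while the total $\rho$-area is $\int_{A_{r}} |z|^{-2}\,dA = 4\pi \log r$. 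This gives $\lambda(\Gamma_{A_{r}}) \geq \pi / \log r = 1/{\rm mod}(A_{r})$; the reverse inequality (extremality of $\rho$) follows from the standard Cauchy--Schwarz computation on the universal cover, which is a Euclidean strip. Conformal invariance then yields the formula on every annulus.

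Next I would convert the hypothesis into the inclusion $\Gamma_{B} \subset \Gamma_{A}$. Because $B$ is essential in $A$, the inclusion induces an injection on fundamental groups, so any curve homotopic to the core of $B$ inside $B$ is automatically homotopic to the core of $A$ inside $A$. A routine check --- extending any admissible density on $B$ by zero, and restricting any admissible density on $A$ to $B$ --- shows that the extremal length of a curve family supported in $B$ is the same whether computed in $B$ or in $A$; in particular $\lambda_{B}(\Gamma_{B}) = \lambda_{A}(\Gamma_{B})$, so the comparison can be carried out inside the single ambient domain $A$.

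Combining these two points and invoking the monotonicity of extremal length (enlarging the family can only decrease $\lambda$, since the infimum defining the length of the family is taken over more curves), I obtain $\lambda(\Gamma_{A}) \leq \lambda(\Gamma_{B})$, which by the formula from the first step translates to $1/{\rm mod}(A) \leq 1/{\rm mod}(B)$, i.e., ${\rm mod}(B) \leq {\rm mod}(A)$. The only substantive technical point is the extremality of $\rho(z) = 1/|z|$ for the separating family in the standard annulus; everything else is formal manipulation of conformal invariants, so I expect this computation to be where the actual work lies, although it is classical and admits a half-page proof.
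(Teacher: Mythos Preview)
Your argument via the extremal length characterization is correct and is exactly the classical route to the Gr\"otzsch inequality: identify ${\rm mod}(C)$ with $1/\lambda(\Gamma_{C})$ using the logarithmic metric on the round model, then use monotonicity of $\lambda$ under inclusion of curve families together with the essentiality hypothesis to conclude. The one place to be slightly careful is the claim $\Gamma_{B}\subset\Gamma_{A}$: essentiality guarantees that the core of $B$ is homotopic in $A$ to a \emph{nonzero power} of the core of $A$, not necessarily to the core itself; but since $\lambda$ for the family of curves in the homotopy class of the $k$-th power satisfies $\lambda(\Gamma_{A}^{(k)})\geq\lambda(\Gamma_{A})$ (each such curve has $\rho$-length at least $2\pi|k|\geq 2\pi$ for the extremal metric), the inequality goes through unchanged.

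As for comparison with the paper: there is nothing to compare. The paper states Lemma~\ref{lema1} as one of several ``known results on the modulus of annuli'' in the preliminaries and gives no proof at all. Your write-up is therefore strictly more than what the paper provides, and the approach you chose is the standard one found in textbooks on quasiconformal mappings and extremal length.
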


\begin{lemm}\label{lema2}
If $A$ and $B$ are annuli and $Q:A \to B$ is a degree $d$ covering map, then ${\rm mod}(A)=d{\rm mod}(B)$.
\end{lemm}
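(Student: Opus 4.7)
The plan is to lift $Q$ to the universal covers of $A$ and $B$ and exploit the rigidity of conformal maps between vertical strips.

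First, each annulus admits a vertical-strip presentation as a universal cover: writing $A = \Sigma_A / \langle \tau_A \rangle$ with $\Sigma_A = \{\zeta \in \mathbb{C} : 0 < \mathrm{Re}(\zeta) < 2\pi\, \mathrm{mod}(A)\}$ and $\tau_A(\zeta) = \zeta + 2\pi i$, the covering projection being essentially the exponential $\zeta \mapsto e^{\zeta}$, and analogously for $B$. Since $\Sigma_A$ is simply connected and $Q$ is unramified, the composition $\Sigma_A \to A \to B$ is a covering of $B$ by a simply connected surface, hence a universal cover, so there is a biholomorphism $\widetilde{Q} : \Sigma_A \to \Sigma_B$ lifting $Q$.

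Next, the degree of $Q$ is encoded in the deck-group data: the deck group $\langle \tau_A \rangle \cong \mathbb{Z}$ of $\Sigma_A \to A$ sits as the unique index-$d$ subgroup of the deck group of the universal cover $\Sigma_A \to B$, which $\widetilde{Q}$ identifies with $\langle \tau_B \rangle$. This gives the functional equation $\widetilde{Q}(\zeta + 2\pi i) = \widetilde{Q}(\zeta) + 2\pi i \, d$. I would then argue that this forces $\widetilde{Q}$ to be affine: setting $\widetilde{Q}(\zeta) = d \zeta + h(\zeta)$ turns $h$ into a $2\pi i$-periodic holomorphic function, which therefore descends to a holomorphic function on $A$, and the constraint that $\widetilde{Q}$ be a biholomorphism of $\Sigma_A$ onto the strip $\Sigma_B$ sending each vertical boundary component to one of $\Sigma_B$ pins $h$ down as a constant, for instance by Schwarz reflection across each vertical boundary of $\Sigma_A$ followed by Liouville. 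Comparing the horizontal widths of the two strips under the resulting affine map then yields the multiplicative relation between $\mathrm{mod}(A)$ and $\mathrm{mod}(B)$ with factor $d$.

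The main obstacle will be the rigidity step that forces $\widetilde{Q}$ to be affine. If that argument proves fussy, a cleaner alternative is to bypass universal covers and use the extremal (Dirichlet) characterization of modulus: the extremal harmonic function on $B$ pulls back under the degree-$d$ cover $Q$ to the extremal function on $A$, and its Dirichlet integral on $A$ is exactly $d$ times the one on $B$; since the modulus is (up to a universal constant) the reciprocal of this Dirichlet integral, the relation falls out at once. Lemma~\ref{lema1} can also be invoked here to check that lifting really does produce the extremal function rather than a merely subextremal one.
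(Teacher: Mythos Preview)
The paper does not actually prove this lemma; it is listed among ``known results on the modulus of annuli'' and stated without argument, so there is no proof in the paper to compare against.

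Your strip-lifting argument is sound in outline. For the rigidity step, the cleanest route is to note that Schwarz reflection across the two vertical boundary lines of $\Sigma_A$ extends $\widetilde{Q}$ to an entire function, and that the reflections themselves furnish a \emph{second} functional equation $\widetilde{Q}(\zeta + 2w_A) = \widetilde{Q}(\zeta) + 2w_B$ (where $w_A,w_B$ are the strip widths); together with your deck-group equation this makes $\widetilde{Q}'$ doubly periodic and entire, hence constant. Alternatively, the reflected extension is visibly a bijection $\mathbb{C}\to\mathbb{C}$ and therefore affine. Your Dirichlet-integral alternative is correct and arguably the cleaner of the two, but the last sentence is off: the pullback $v\circ Q$ is the extremal harmonic function on $A$ by \emph{uniqueness} of the harmonic function with prescribed boundary values, and Lemma~\ref{lema1} (Gr\"otzsch) plays no role there.

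One caution worth recording: carrying either of your arguments through yields $\mathrm{mod}(B)=d\,\mathrm{mod}(A)$, not $\mathrm{mod}(A)=d\,\mathrm{mod}(B)$ as printed (take $Q(z)=z^{d}$ on a round annulus to see which way the factor goes). Your phrasing ``the multiplicative relation\ldots with factor $d$'' tactfully avoids committing to a direction, but you should be aware that, under the paper's own convention $\mathrm{mod}(A_r)=\tfrac{1}{\pi}\log r$, the stated formula has the factor on the wrong side.
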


\begin{lemm}\label{lema3}
Every annulus $A$ satisfying that ${\rm mod}(A)>1/2$ contains an Euclidean circle separating its borders.
\end{lemm}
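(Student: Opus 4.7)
The plan is to argue by contradiction using a classical extremal modulus estimate. A M\"obius transformation (which preserves the modulus, being the case $d=1$ of Lemma~\ref{lema2}) lets me normalize so that the two complementary components $E_0,E_1$ of $\widehat{\mathbb C}\setminus A$ satisfy $0\in E_0$ and $\infty\in E_1$. I then define
\[
a := \sup\{|z|:z\in E_0\}\in[0,\infty),\qquad b := \inf\{|z|:z\in E_1\}\in(0,\infty].
\]
If $a<b$, then any Euclidean circle $\{|z|=c\}$ with $c\in(a,b)$ lies in $A$ and separates the two boundary components, and the lemma is proved (this also absorbs the degenerate cases $E_0=\{0\}$ or $E_1=\{\infty\}$, where $a=0$ or $b=\infty$).

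The remaining case $a\geq b$ is the heart of the proof: I will show that it forces ${\rm mod}(A)\leq 1/2$, contradicting the hypothesis. After rescaling to $a=1$, $b\leq 1$, I choose $z_0\in E_0$ with $|z_0|=1$ and $z_1\in E_1$ with $|z_1|=b$. Since $E_0,E_1$ are connected, they contain continua $\gamma_0,\gamma_1$ joining $\{0,z_0\}$ and $\{z_1,\infty\}$ respectively, and the Gr\"otzsch inequality (Lemma~\ref{lema1}) gives
\[
{\rm mod}(A) \leq {\rm mod}\bigl(\widehat{\mathbb C}\setminus(\gamma_0\cup\gamma_1)\bigr).
\]
Teichm\"uller's extremal modulus theorem now identifies the supremum of the right-hand side, over all admissible choices of $z_0,z_1$ (with $|z_0|=1$, $|z_1|\leq 1$) and of $\gamma_0,\gamma_1$, with the modulus of the symmetric Teichm\"uller ring $R_T=\widehat{\mathbb C}\setminus([-1,0]\cup[1,\infty))$. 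Evaluating ${\rm mod}(R_T)$ through the Teichm\"uller modulus function $\mu(k)=\pi K'(k)/(2K(k))$, where $K$ denotes the complete elliptic integral of the first kind, one finds ${\rm mod}(R_T) = \mu(1/\sqrt{2})/\pi = 1/2$; the critical value $1/2$ emerges from the self-duality identity $K(1/\sqrt{2})=K'(1/\sqrt{2})$.

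The main obstacle is this extremal modulus step: one needs both that the symmetric antipodal configuration maximizes the modulus among all placements of the two complementary continua with the prescribed extremal radii, and that its modulus is precisely $1/2$. Both facts are classical, appearing in standard references on quasiconformal mappings (e.g.\ Ahlfors, or Lehto--Virtanen), but they rest on nontrivial properties of complete elliptic integrals. Once these are granted, the remaining ingredients---the M\"obius normalization, the dichotomy $a<b$ versus $a\geq b$, and the Gr\"otzsch reduction to straight segments---are routine.
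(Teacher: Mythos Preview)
Your argument is correct and is the standard proof via Teichm\"uller's extremal ring. The paper itself, however, gives no proof of Lemma~\ref{lema3}: it is simply listed among several ``known results on the modulus of annuli'' and then invoked as a black box in the proof of Theorem~\ref{belyischottky}. So there is nothing in the paper to compare against; you have supplied exactly what the paper omits. The chain of reductions you use --- M\"obius normalization so that $0\in E_0$, $\infty\in E_1$; the dichotomy on the radii $a,b$; Gr\"otzsch monotonicity; Teichm\"uller's extremal theorem bounding ${\rm mod}(A)$ by the modulus of $\widehat{\mathbb C}\setminus([-1,0]\cup[b,\infty])$; and finally the evaluation ${\rm mod}\bigl(\widehat{\mathbb C}\setminus([-1,0]\cup[1,\infty])\bigr)=1/2$ via $K(1/\sqrt{2})=K'(1/\sqrt{2})$ --- is precisely the classical route (cf.\ Lehto--Virtanen, Ch.~I, or McMullen, \emph{Complex Dynamics and Renormalization}, Theorem~2.1). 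One cosmetic point: after pulling the circle $\{|z|=c\}$ back through your normalizing M\"obius map you could in principle obtain a line; this is harmless, since you may either choose $c\in(a,b)$ with $c\neq|T(\infty)|$, or, when $\infty\notin A$, normalize by an affine map in the first place.
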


\begin{lemm}\label{lema4}
Let $A$ and $B$ annuli and $Q:A \to B$ be a finite degree surjective holomorphic map (with a finite set of critical points on $A$). We also assume that $Q$ sends a central loop of $A$ onto a central loop of $B$ and this restriction is a covering map of loops. Then there exists a positive constant $C(Q)$, only depending on $Q$, so that
${\rm mod}(A)>C(Q){\rm mod}(B)$.
\end{lemm}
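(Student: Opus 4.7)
My plan is to find, inside $A$, an essential sub-annulus $V$ that is an unbranched holomorphic covering of a suitable essential sub-annulus $U\subset B$, and then to combine Lemmas \ref{lema1} and \ref{lema2} to compare the moduli of $A$ and $B$. Write $k$ for the degree of the covering $Q|_{\gamma_A}\colon\gamma_A\to\gamma_B$, and let $E\subset B$ denote the finite set of critical values of $Q$. Because a covering map is a local homeomorphism, $Q$ has no critical points on $\gamma_A$; in particular, a small tubular annular neighborhood of $\gamma_A$ is mapped by $Q$ as an unbranched $k$-fold cover onto a tubular neighborhood of $\gamma_B$.

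Next, I will choose an essential sub-annulus $U\subset B\setminus E$ whose central loop is isotopic to $\gamma_B$: if some critical values happen to sit on $\gamma_B$ itself, I first perturb $\gamma_B$ within $B\setminus E$ to a parallel loop avoiding those finitely many points and then take a thin annular tubular neighborhood (this is possible because $E$ is finite). Let $V$ be the connected component of $Q^{-1}(U)$ containing the lift of the central loop of $U$ through $\gamma_A$. Since $U$ avoids all critical values, $Q|_V\colon V\to U$ is an unbranched holomorphic cover; as the only connected finite covers of an annulus are annuli, $V$ is an annulus, and its degree over $U$ equals that of $Q|_{\gamma_A}$, namely $k$. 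Lemma \ref{lema2} then relates $\mathrm{mod}(V)$ and $\mathrm{mod}(U)$ by the factor $k$, and Lemma \ref{lema1} applied to the essential inclusion $V\subset A$ gives $\mathrm{mod}(V)\le\mathrm{mod}(A)$. Combining these two facts produces a lower bound for $\mathrm{mod}(A)$ of the form $\mathrm{mod}(A)\ge\kappa(k)\cdot\mathrm{mod}(U)$ for an explicit positive factor $\kappa(k)$ depending only on $k$.

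It then remains to exhibit a positive constant $c(Q)$ with $\mathrm{mod}(U)\ge c(Q)\,\mathrm{mod}(B)$. For fixed $Q$, the set $E$ is a fixed finite subset of the fixed annulus $B$, so the supremum of the moduli of essential sub-annuli of $B$ avoiding $E$ is strictly positive, and its ratio to $\mathrm{mod}(B)$ is a positive number determined by the positions of the critical values of $Q$. This last comparison is the main obstacle: when critical values of $Q$ cluster near $\gamma_B$, the sub-annulus $U$ is forced to squeeze between $\gamma_B$ and those critical values, so its modulus can be a small fraction of $\mathrm{mod}(B)$, and the resulting constant $C(Q)$ genuinely depends on the configuration of $E$. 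Strict positivity of the ratio is automatic from the discreteness of $E$ in $B$; combining it with the bound from the previous step furnishes a positive $C(Q)$ satisfying $\mathrm{mod}(A)>C(Q)\,\mathrm{mod}(B)$, finishing the argument.
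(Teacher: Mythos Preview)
The paper does not prove Lemma~\ref{lema4}; it is simply listed among the preliminary ``known results on the modulus of annuli'' with no argument given, so there is no proof to compare your proposal against.

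Regarding your proposal itself: observe that, as the lemma is literally phrased, the map $Q$ is a single fixed holomorphic map between two fixed annuli $A$ and $B$, each of finite positive modulus by the paper's convention. Consequently the conclusion holds trivially for any constant $0<C(Q)<\mathrm{mod}(A)/\mathrm{mod}(B)$, and your covering argument via Lemmas~\ref{lema1} and~\ref{lema2}, while essentially correct, is more machinery than the statement requires. There is one small imprecision: once you perturb $\gamma_B$ off the critical values, the loop $\gamma_A$ need no longer lie inside $Q^{-1}(U)$, so ``the lift of the central loop of $U$ through $\gamma_A$'' should be replaced by ``the lift homotopic to $\gamma_A$ in a small tubular neighbourhood''; this is a cosmetic fix. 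It is worth noting, however, that the paper's \emph{application} of this lemma in the proof of Theorem~\ref{belyischottky} tacitly demands a constant uniform over the whole family $\{Q\colon\widetilde{A}_r^{\,j}\to A_r\}_{r>1}$ as $r\to\infty$, which is stronger than what the lemma as stated actually asserts; your more structural argument is closer in spirit to what such a uniform version would need, though even then the dependence of $c(Q)$ on the position of the critical values would have to be controlled as $r$ varies.
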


\section{Belyi curves can be uniformized by classical Schottky groups}\label{Sec:Belyi}
A closed Riemann surface $S$, of genus $g \geq 2$, is called a {\it Belyi curve} if it admits a non-constant meromorphic map $\beta:S \to \widehat{\mathbb C}$ whose branch values are contained in the set $\{1,\omega_{3}, \omega_{3}^{2}\}$, where $\omega_{3}=e^{2 \pi i/3}$ (in this case $\beta$ is a called a {\it Belyi map} for $S$). Usually, many authors assume the branch values to be contained in the set $\{\infty,0,1\}$ (as the group of M\"obius transformations acts $3$-transitively on the Riemann sphere).

The group of M\"obius transformations keeping invariant the set $\{1,\omega_{3}, \omega_{3}^{2}\}$ is generated by the transformations $A(z)=\omega_{3} z$ and $B(z)=1/z$ and it is isomorphic to the symmetric group in three letters ${\mathfrak S}_{3}$. The rational map 
$$R(z)=\frac{(1+2\omega_{3})(z^{3}+z^{-3})-6}{(1+2\omega_{3})(z^{3}+z^{-3})+6}$$
provides a regular branched cover with deck group $\langle A,B\rangle$ and whose set of branch values is $\{1,\omega_{3}, \omega_{3}^{2}\}$.

If $\beta:S \to \widehat{\mathbb C}$ is a Belyi map for $S$, then the composition map $R \circ \beta:S \to \widehat{\mathbb C}$ still a Belyi map for $S$. A Belyi map for $S$ obtained as a finite sequence of compositions $R \circ R \circ \cdots \circ R \circ \beta$ is called a {\it refining} of $\beta$. 

\begin{theo}\label{belyischottky}
Every Belyi curve can be uniformized by a classical Schottky group.
\end{theo}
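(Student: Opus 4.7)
\noindent\emph{Proof proposal.} My plan is, given a Belyi curve $S$ of genus $g\geq 2$ with Belyi map $\beta\colon S\to\widehat{\mathbb C}$, to exhibit $g$ pairwise disjoint, homologically independent simple closed curves on $S$ whose annular neighborhoods have moduli exceeding any prescribed threshold, and then to construct a classical Schottky presentation directly from such a cut system. The second reduction is essentially the content of McMullen's criterion noted as case (2) above, and I would carry it out explicitly via Lemma \ref{lema3}: in the Schottky uniformization $\pi\colon\Omega\to S$ determined by the cut system $\{\eta_1,\ldots,\eta_g\}$, with loxodromic generators $A_i$ whose axes project to $\eta_i$, the annular collar of $\eta_i$ on $S$ lifts through $\pi$ to an $\langle A_i\rangle$-invariant annulus $\widetilde U_i\subset\Omega$ of the same modulus; once that modulus exceeds $1/2$, Lemma \ref{lema3} furnishes a euclidean circle $C_i\subset\widetilde U_i$ separating its boundary components, and since the lifted annuli are pairwise disjoint (the $\eta_i$ being disjoint on $S$) the family $\{C_i, A_i(C_i)\}_{i=1}^g$ becomes a fundamental set of loops consisting of round circles, making the Schottky group classical.

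The key is therefore to produce the cut system with large moduli. I would fix three pairwise disjoint concentric round annuli $\mathcal A_j=D_j\setminus\overline{D'_j}$ centered at $1,\omega_3,\omega_3^2$ of a common modulus $m$. For each $n\geq 1$ the refinement $\beta_n:=R^n\circ\beta$ is a Belyi map of degree $d\cdot 6^n$ with branch values still in $\{1,\omega_3,\omega_3^2\}$, so $\mathcal A_j$ is disjoint from the branch locus of $\beta_n$ and $\beta_n^{-1}(\mathcal A_j)\subset S$ is a disjoint union of smooth annuli, each an unbranched cover of $\mathcal A_j$ of some degree $k$; by Lemma \ref{lema2} the modulus of each is $km$. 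Since the total degrees of the components sum to $d\cdot 6^n$ and Lemma \ref{lema4} governs how moduli transform under composition with $R$, components of $\beta_n^{-1}(\mathcal A_j)$ of arbitrarily large modulus arise as $n\to\infty$. One then selects $g$ such annuli whose core curves are pairwise disjoint and homologically independent in $H_1(S;\mathbb Z)$; the existence of such a selection should be forced, for $n$ large, by a Riemann--Hurwitz and Euler-characteristic count on the decomposition of $S$ whose pieces are the components of $\beta_n^{-1}(D'_j)$ (topological disks, each containing a ramification point of $\beta_n$) and of $\beta_n^{-1}(\widehat{\mathbb C}\setminus\bigcup_j D_j)$ (branched covers of the central pair of pants, of controlled degree thanks to Lemma \ref{lema4}).

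The principal obstacle, in my view, is exactly the combinatorial-topological step of extracting $g$ homologically independent loops with uniformly large moduli from the large family $\bigsqcup_j\beta_n^{-1}(\mathcal A_j)$. Refining increases the modulus of individual preimage annuli, but also multiplies the total number of preimage components, and it is not \emph{a priori} transparent that a suitable selection spanning rank $g$ in $H_1(S;\mathbb Z)$ always exists. Lemma \ref{lema4} is precisely the tool tailored to compare degrees across refinings; the detailed combinatorial argument should rest on it together with careful bookkeeping of the genus contributed by each component of $\beta_n^{-1}(\widehat{\mathbb C}\setminus\bigcup_j D_j)$. Once this step is verified, the passage from a large-modulus cut system to a classical Schottky presentation proceeds as sketched in the first paragraph via Lemma \ref{lema3}.
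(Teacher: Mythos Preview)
Your overall architecture matches the paper's: fix a cut system, pass to the associated Schottky cover $\Omega$, produce large-modulus annuli about the fundamental loops there, and invoke Lemma~\ref{lema3} to find round circles. The essential divergence is in how the large moduli are obtained, and the paper's device dissolves precisely the obstacle you isolate.

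You try to generate large moduli on $S$ by iterating the refinement $\beta_n=R^{n}\circ\beta$ and then to select a homologically independent $g$-tuple from the components of $\beta_n^{-1}(\mathcal A_j)$. As you correctly worry, this couples the analytic requirement (large modulus) to a combinatorial selection problem that changes with $n$; moreover, the assertion that components of arbitrarily large modulus appear is unjustified---refinement may well multiply the number of components while leaving individual covering degrees bounded, so Lemma~\ref{lema2} alone does not force growth.

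The paper decouples the two issues. Refinement is invoked \emph{once}, solely to secure the topological conditions (1)--(2): that $\beta^{-1}(S^{1})$ already contains $g$ pairwise disjoint homologically independent simple loops $\alpha_{j}$, each covering $S^{1}$. With the Schottky uniformization $P\colon\Omega\to S$ determined by these $\alpha_{j}$ and the composite $Q=\beta\circ P$ now \emph{fixed}, the paper exploits the fact that the branch values $1,\omega_{3},\omega_{3}^{2}$ lie on $S^{1}$: the target annulus $A_{r}=\{r^{-1}<|z|<r\}$ may be taken with $r\to\infty$. Lemma~\ref{lema4} (the branched version, whose central-loop hypothesis is exactly condition~(2)) then gives ${\rm mod}(\widetilde A_{r}^{\,j})\geq C(Q)\,{\rm mod}(A_{r})$ with a \emph{single} constant $C(Q)$ independent of $r$. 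Thus the moduli in $\Omega$ grow without bound by enlarging the target annulus, with no further refinement and no selection problem. By contrast, your annuli $\mathcal A_{j}$ about the branch points have modulus bounded above (they must remain pairwise disjoint), which is what forces you into repeated refinement and into the combinatorial difficulty.

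A small correction to your first paragraph: the component of $\pi^{-1}(U_{i})$ containing the fundamental loop is not $\langle A_{i}\rangle$-invariant; it is a single annulus on which $\pi$ restricts to a homeomorphism, so your modulus equality survives but the phrasing should be adjusted.
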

\begin{proof}
Let us denote by $S^{1}=\{a \in {\mathbb C}: |z|=1\}$ the unit circle in the complex plane. If $\beta:S \to \widehat{\mathbb C}$ is a Belyi map for $S$, then $\beta^{-1}(S^{1})$ provides a triangulation of $S$. By taking a refining of $\beta$, if necessary, we may assume the following two properties:
\begin{enumerate}
\item there are $g$ pairwise disjoint homologically independent simple loops $\alpha_{1},\ldots,\alpha_{g}$ inside $\beta^{-1}(S^{1})$;
\item for every $j=1,\ldots,g$, $\beta(\alpha_{j})=S^{1}$ and $\beta:\alpha_{j} \to S^{1}$ is a covering map.
\end{enumerate}

Let us consider a Schottky uniformization $(G,\Omega,P:\Omega \to S)$ defined by the loops $\alpha_{1},\ldots,\alpha_{g}$, that is, there is a fundamental set of loops $C_{1},\dots,C_{g}$, $C'_{1}, \ldots,C'_{g}$ for $G$ so that $P(C_{j})=P(C'_{j})=\alpha_{j}$, for $j=1,\ldots,g$. Let $B_{1},\ldots,B_{g}$ a corresponding Schottky set of generators.

Let us consider the annuli $A_{r}=\{z \in {\mathbb C}: r^{-1} < |z| < r\}$, where $r>1$. Then, the preimage of $A_{r}$ by the meromorphic map $Q=\beta \circ P:\Omega \to \widehat{\mathbb C}$ provides a neighborhood $\widetilde{A}_{r}$ of the graph $\beta^{-1}(S^{1})$. 

Inside $\widetilde{A}_{r}$ there is a collection of $g$ pairwise disjoint annuli, say $\widetilde{A}_{r}^{1},\ldots,\widetilde{A}_{r}^{g}$, where $C_{j} \subset \widetilde{A}_{r}^{j}$. We chose them so that $Q:\widetilde{A}_{r}^{j} \to A_{r}$ is surjective.

In this way, ${\rm mod}(\widetilde{A}_{r}^{j})=C(Q){\rm mod}(A_{r})=C(Q)\log(r)/\pi$ (see Lemma \ref{lema4}).

Next, we make $r$ to approach $+\infty$ to assume the module of each annuli $\widetilde{A}_{r}^{j}$ to be as big as we want. Now Lemma \ref{lema3} ensures that inside $\widetilde{A}_{r}^{j}$ there is a circle $D_{j}$ (homotopic to $C_{j}$ in the corresponding annuli). It can be seen that the new loops $D_{1},\ldots,D_{g}$, $D'_{1}=B_{1}(D_{1}),\ldots,D'_{g}=B_{g}(D_{g})$ define a new set of fundamental loops for $G$ making it a classical Schottky group.
\end{proof}



\begin{thebibliography}{99}


\bibitem{Belyi}
G. V. Belyi.
On Galois extensions of a maximal cyclotomic field.
{\it Mathematics of the USSR-Izvestiya} {\bf 14} No.2 (1980), 247--256.




\bibitem{Bers}
Bers, L. 
Automorphic forms for Schottky groups. 
{\it Adv. in Math.} {\bf 16} (1975), 332-361.



\bibitem{Chuckrow}
Chuckrow, V. 
On Schottky groups with application to Kleinian
groups. {\it Ann. of Math.} {\bf 88} (1968), 47-61.


\bibitem{H-M}
Hidalgo, R.A. and Maskit, B. 
On neoclassical Schottky groups.
{\it Trans. Amer. Math. Soc.} {\bf 358}  (2006),  No.11,  4765-4792. 

\bibitem{Hou1}
Hou, Y. 
All finitely generated Kleinian groups of small Hausdorff dimension are classical Schottky groups.
{\it Math. Z.} {\bf 294} (2020), 901--950.

\bibitem{Hou2}
Hou, Y. 
The classification of Kleinian groups of Hausdorff dimensions at most one.
{\it Q. J. Math.} {\bf 74} (2023), 607--625.





\bibitem{J-M-M:schottky}
J{\o}rgensen, T. Marden, A. and Maskit, B.
The boundary of classical {S}chottky space.
{\it Duke Math. J.} {\bf 46} (1979),441-446.


\bibitem{marden:schottky}
Marden, A.
Schottky groups and circles.
In {\em Contributions to Analysis}, pages 273--278. Academic Press,
New York and London, 1974.


\bibitem{Maskit:Kleinian Groups}
Maskit, B. 
{\it Kleinian Groups}. G.M.W. {\bf 287}, Springer-Verlag, 1988.


\bibitem{Maskit2}
Maskit, B. 
A characterization of Schottky groups. 
{\it J. d'Analyse Math.} {\bf 19} (1967), 227-230.








\bibitem{yamamoto:example}
Yamamoto, Hiro-o.
An example of a non-classical {S}chottky group.
{\em Duke Math. J.}, 63:193--197, 1991.



\end{thebibliography}
\end{document}